\numberwithin{equation}{section}
\theoremstyle{theorem}
\newtheorem{theorem}{Theorem}
\newtheorem*{theorem*}{Theorem}
\newtheorem{lemma}[theorem]{Lemma}
\theoremstyle{definition}
\newtheorem*{example*}{Example}
\newtheorem{conjecture}{Conjecture}
\newtheorem*{conjecture*}{Conjecture}
\newtheorem*{remark*}{Remark}
\newtheorem*{remarks*}{Remarks}
\patchcmd{\section}{\scshape}{\bfseries\boldmath}{}{}
\patchcmd{\subsection}{\bfseries}{\bfseries\boldmath}{}{}
\renewcommand{\@secnumfont}{\bfseries}
\begin{document}
	
\title[Arithmetic Properties for Schur-Type Overpartitions]{Elementary Proofs of Arithmetic Properties for Schur-Type Overpartitions Modulo Small Powers of $2$}

\author[S. Chern]{Shane Chern}
\address[S. Chern]{Department of Mathematics and Statistics, Dalhousie University, Halifax, NS, B3H 4R2, Canada}
\email{chenxiaohang92@gmail.com}
	
\author[R. da Silva]{Robson da Silva}
\address[R. da Silva]{Universidade Federal de S\~ao Paulo, S\~ao Jos\'e dos Campos, SP 12247--014, Brazil}
\email{silva.robson@unifesp.br}

\author[J. A. Sellers]{James A. Sellers}
\address[J. A. Sellers]{Department of Mathematics and Statistics, University of Minnesota Duluth, Duluth, MN 55812, USA}
\email{jsellers@d.umn.edu}

\subjclass[2010]{11P83, 05A17}
	
\keywords{partitions, congruences, overpartitions, generating functions, dissections}
	
\maketitle
\begin{abstract}
In 2022, Broudy and Lovejoy extensively studied the function $S(n)$ which counts the number of overpartitions of \emph{Schur-type}.  In particular, they proved a number of congruences satisfied by $S(n)$ modulo $2$, $4$, and $5$.  In this work, we extend their list of arithmetic properties satisfied by $S(n)$ by focusing on moduli which are small powers of 2.  In particular, we prove the following infinite family of Ramanujan-like congruences:   
\textit{For all $\alpha\geq 0$ and $n\geq 0$},
$$
S\left(2^{5+2\alpha}n+\left(2^{5+2\alpha}-\frac{2^{2+2\alpha}-1}{3}\right)\right)\equiv 0 \pmod{16}.
$$
All of the proof techniques used herein are elementary, relying on classical $q$-series identities and generating function manipulations as well as the parameterization work popularized by Alaca, Alaca, and Williams.  
\end{abstract}

\section{Introduction}  

In 2022, Broudy and Lovejoy \cite{BL} defined $S(n)$ to be the number of overpartitions $(\lambda_1, \lambda_2, \dots, \lambda_s)$ of $n$ where only parts divisible by $3$ may occur non-overlined, with the conditions that 
\begin{itemize}[leftmargin=*,align=left,itemsep=6pt]
\item{} the smallest part is $\overline{1}$, $\overline{2}$, $\overline{3}$ or $6$ modulo $6$; 
\item{} for $u, v\in \left\{  \overline{1}, \overline{2}, \overline{3}, 3\right\}$, if $\lambda_i\equiv u \pmod{3}$ and $\lambda_{i+1}\equiv v \pmod{3}$, then $$\lambda_i - \lambda_{i+1} \geq \overline{A}_{3,1}(u,v);$$ and 
\item{} for $u, v\in \left\{  \overline{1}, \overline{2}, \overline{3}, 3\right\}$, if $\lambda_i\equiv u \pmod{3}$ and $\lambda_{i+1}\equiv v \pmod{3}$, then $$\lambda_i - \lambda_{i+1} \equiv \overline{A}_{3,1}(u,v) \pmod{6},$$
where $\overline{A}_{3,1}$ is defined by the matrix 
$$\overline{A}_{3,1} \ :=\ \  \begin{pNiceMatrix}[first-row,first-col,nullify-dots]
& \overline{1} & \overline{2} & \overline{3} & 3 \\
\overline{1} & 3 & 2 & 4 & 1  \\
\overline{2} & 4 & 3 & 5 & 2  \\
\overline{3} & 5 & 4 & 6 & 3 \\
3 & 2 & 1 & 3 & 0 
\end{pNiceMatrix}.$$
\end{itemize}
Indeed, this function $S(n)$ is a special case of a more general result of Bringmann, Lovejoy, and Mahlburg \cite{BLM}; see also the work of Sang and Shi \cite{SS} where the work of Bringmann, Lovejoy, and Mahlburg is generalized.  

Broudy and Lovejoy say that the overpartitions counted by $S(n)$ are of \emph{Schur-type} to reflect their connection with Schur's celebrated partition theorem \cite{Sch1926}, and they note that the generating function for $S(n)$ is given by 
\begin{equation*}
\sum_{n=0}^{\infty} S(n)q^n  = \frac{1}{(q;q^6)_\infty(q^5;q^6)_\infty(q^6;q^6)_\infty}  
\end{equation*}
where $(a;q)_\infty := (1-a)(1-aq)(1-aq^{2})\cdots$
is the usual \emph{$q$-Pochhammer symbol}. 
The primary objective of Broudy and Lovejoy is to consider various arithmetic properties satisfied by $S(n)$, and they accomplish this goal using a combination of tools from the theory of modular forms as well as elementary generating function manipulations.  In particular, they prove a number of arithmetic properties satisfied by $S(n)$ modulo $2$, $4$, and $5$ in various arithmetic progressions.  

In their concluding remarks, Broudy and Lovejoy share the following:  
\begin{quotation}
Using \textsf{Mathematica}, we computed the values of $S(n)$ up to $n=40,000$ and searched for (likely) congruences of the form 
$$S(An+B)\equiv 0 \pmod{M}$$
for $M\in [2,20] \cup \{32,64\}$ and $A\leq 5,000$.  The results in this paper do not cover all of the congruences we detected.  $\dots$  We also observed some apparently sporadic congruences modulo $8$, $16$, and $32$.  We leave it to the motivated reader to explain these congruences modulo powers of $2$.  
\end{quotation}
Our primary goal in this note is to address Broudy and Lovejoy's invitation to consider such arithmetic properties modulo $8$, $16$, and $32$.  In particular, we prove (via mathematical induction) that an infinite family of Ramanujan-like divisibility properties is satisfied by $S(n)$ modulo $16$ by utilizing an internal congruence modulo $16$ for $S(n)$.  (This is similar in spirit to the work accomplished by Broudy and Lovejoy modulo $5$.) We also prove two internal congruences modulo 8 which are satisfied by $S(n).$  In our closing section, we share comments related to internal congruences modulo 32 which appear to be satisfied by $S(n)$.

All of the proof techniques used herein are elementary, relying on classical $q$-series identities and generating function manipulations as well as the parameterization work of Alaca, Alaca, and Williams \cite{AAW2006}.  


\section{Preliminaries}

As was noted above, 
\begin{align}
\sum_{n=0}^{\infty} S(n)q^n & = \frac{1}{(q;q^6)_\infty(q^5;q^6)_\infty(q^6;q^6)_\infty} \nonumber \\
& = \frac{(q^2;q^6)_\infty(q^3;q^6)_\infty(q^4;q^6)_\infty}{(q;q)_\infty} \nonumber \\
& = \frac{(q^2;q^6)_\infty(q^4;q^6)_\infty(q^6;q^6)_\infty\cdot (q^3;q^6)_\infty(q^6;q^6)_\infty}{(q;q)_\infty(q^6;q^6)_\infty^2} \nonumber \\
& = \frac{f_2f_3}{f_1f_6^2}, \label{gf}
\end{align}
where we use the following shorthand notation for $q$-Pochhammer symbols:
\begin{align*}
    f_r := (q^r;q^r)_\infty.
\end{align*}

In order to prove our results, we will require several elementary generating function dissection tools.  We begin by sharing a number of $2$-dissection results that will be helpful.  

\begin{lemma}
We have
\begin{align*}
{f_1^2} & = \frac{f_2f_8^5}{f_4^2f_{16}^2} - 2q \frac{f_2f_{16}^2}{f_8}, \\
\frac{1}{f_1^2} & = \frac{f_8^5}{f_2^5f_{16}^2} + 2q \frac{f_4^2f_{16}^2}{f_2^5f_8}, \\
\frac{1}{f_1^4} & = \frac{f_4^{14}}{f_2^{14}f_{8}^4} + 4q \frac{f_4^2f_{8}^4}{f_2^{10}}, \\
\frac{1}{f_1^8} & = \frac{f_4^{28}}{f_2^{28}f_{8}^8} + 8q \frac{f_4^{16}}{f_2^{24}} +16q^2\frac{f_4^4f_8^8}{f_2^{20}}.
\end{align*}
\label{L4}
\end{lemma}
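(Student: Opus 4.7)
The plan is to establish all four identities via classical $2$-dissections of Ramanujan's theta functions
\[
\varphi(q) = \sum_{n\in\mathbb{Z}} q^{n^2} = \frac{f_2^5}{f_1^2 f_4^2}, \qquad \psi(q) = \sum_{n\geq 0} q^{n(n+1)/2} = \frac{f_2^2}{f_1},
\]
together with $\varphi(-q) = f_1^2/f_2$ (all three product forms come from Jacobi's triple product). Splitting the series for $\varphi(q)$ by the parity of the summation index yields the fundamental $2$-dissection $\varphi(q) = \varphi(q^4) + 2q\psi(q^8)$, and replacing $q$ by $-q$ gives $\varphi(-q) = \varphi(q^4) - 2q\psi(q^8)$.

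Identity (i) then follows at once: substitute the $q$-product forms of $\varphi(-q)$, $\varphi(q^4)$, and $\psi(q^8)$ into this last equation and multiply through by $f_2$. For identity (ii), I would use the relation $\varphi(q)\varphi(-q) = \varphi(-q^2)^2$ (easily verified from the product representations, since both sides reduce to $f_2^4/f_4^2$) to rewrite $1/\varphi(-q) = \varphi(q)/\varphi(-q^2)^2$; applying the dissection of $\varphi(q)$ in the numerator, using $\varphi(-q^2)^2 = f_2^4/f_4^2$, and dividing by $f_2$ gives (ii).

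For identity (iii), the plan is to square (ii). The cross term supplies the odd-order summand $4q\,f_4^2 f_8^4/f_2^{10}$ directly. The sum of the two even-order contributions must then collapse to $f_4^{14}/(f_2^{14} f_8^4)$; after cancelling common factors, this reduces to the classical theta identity
\[
\varphi(q)^2 = \varphi(q^2)^2 + 4q\,\psi(q^4)^2.
\]
I would prove this auxiliary identity by combining $\varphi(\pm q) = \varphi(q^4) \pm 2q\psi(q^8)$ via difference of squares, which gives $\varphi(q)^2 - \varphi(-q)^2 = 8q\,\varphi(q^4)\psi(q^8)$, with the elementary relation $\varphi(q)\psi(q^2) = \psi(q)^2$ (immediate from the product forms, since both sides equal $f_2^4/f_1^2$) applied with $q \mapsto q^4$ to rewrite $\varphi(q^4)\psi(q^8) = \psi(q^4)^2$. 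Adding this to $\varphi(q)^2 + \varphi(-q)^2 = 2\varphi(q^2)^2$, whose verification uses the same ingredients, yields the desired identity.

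Finally, identity (iv) follows by squaring (iii); the three displayed terms arise respectively as the square of the first summand, twice the cross product, and the square of the second summand, with no further simplification needed. The main obstacle throughout the lemma is verifying the auxiliary theta identity needed to pass from the squared form of (ii) to (iii); every other step is routine algebra in $q$-Pochhammer products.
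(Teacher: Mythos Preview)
Your argument is correct in outline and considerably more self-contained than the paper's, which simply cites Berndt for the first two identities, cites Brietzke--da~Silva--Sellers for the third, and observes that the fourth is the square of the third. Where the paper defers to the literature, you reconstruct everything from the $2$-dissection $\varphi(\pm q)=\varphi(q^4)\pm 2q\psi(q^8)$ together with a couple of product relations; this buys a proof that stands on its own, at the cost of the algebra needed to collapse the even part when passing from the square of (ii) to (iii). Both you and the paper agree that (iv) is obtained by squaring (iii).

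One point needs tightening. In deriving $\varphi(q)^2=\varphi(q^2)^2+4q\,\psi(q^4)^2$, you propose to add $\varphi(q)^2-\varphi(-q)^2=8q\,\psi(q^4)^2$ to $\varphi(q)^2+\varphi(-q)^2=2\varphi(q^2)^2$, asserting that the latter ``uses the same ingredients.'' But substituting the dissections $\varphi(\pm q)=\varphi(q^4)\pm 2q\psi(q^8)$ into the sum gives $2\varphi(q^4)^2+8q^2\psi(q^8)^2$, and equating this with $2\varphi(q^2)^2$ is exactly the target identity at $q\mapsto q^2$: the route is circular. The fix is immediate and uses nothing new: argue directly on the double series $\varphi(q)^2=\sum_{a,b\in\mathbb{Z}}q^{a^2+b^2}$, splitting by the parity of $a+b$. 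The even part becomes $\varphi(q^2)^2$ via the bijection $(a,b)\mapsto\bigl(\tfrac{a+b}{2},\tfrac{a-b}{2}\bigr)$, and the odd part equals $4q\,\varphi(q^4)\psi(q^8)=4q\,\psi(q^4)^2$ by writing one coordinate as $2c$ and the other as $2d+1$ and applying the relation $\varphi(q)\psi(q^2)=\psi(q)^2$ at $q\mapsto q^4$ that you already have. With this adjustment your derivation of (iii) goes through, and (i), (ii), (iv) are exactly as you describe.
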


\begin{proof}
The first two identities follow from dissection formulas of Ramanujan which appear in the work of Berndt \cite[p.~40]{Berndt}. The third identity can be found in the work of Brietzke, da Silva, and Sellers \cite[Equation (18)]{BSS}. The fourth identity follows by squaring the third identity.  
\end{proof}

\begin{lemma}
We have
\begin{equation}
\frac{f_3}{f_1} = \frac{f_4f_6f_{16}f_{24}^2}{f_2^2f_8f_{12}f_{48}} + q \frac{f_6f_8^2f_{48}}{f_2^2f_{16}f_{24}}.   
\label{eq3}
\end{equation}
\end{lemma}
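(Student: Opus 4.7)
The plan is to reduce the claim to a 2-dissection of a simpler $q$-series and then apply standard $q$-series techniques, mirroring the methods used to prove Lemma~\ref{L4}.

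First, I would rewrite the quotient on the left using Ramanujan's function $\chi(-q) := (q;q^2)_\infty$. Since $\chi(-q) = f_1/f_2$ and $\chi(-q^3) = f_3/f_6$, we have
$$\frac{f_3}{f_1} = \frac{f_6}{f_2}\cdot\frac{\chi(-q^3)}{\chi(-q)} = \frac{f_6}{f_2}\cdot\frac{1}{(q;q^6)_\infty(q^5;q^6)_\infty}.$$
Multiplying the claimed right-hand side by $f_2/f_6$ converts the statement into the equivalent 2-dissection
$$\frac{1}{(q;q^6)_\infty(q^5;q^6)_\infty} = \frac{f_2 f_4 f_{16} f_{24}^2}{f_2^2 f_6 f_8 f_{12} f_{48}} + q\,\frac{f_2 f_8^2 f_{48}}{f_2^2 f_6 f_{16} f_{24}}.$$
It is worth noting that this left-hand side is precisely a factor of the generating function for $S(n)$ appearing in \eqref{gf} (differing by a single factor of $f_6$), so the lemma is intimately linked with the function under study and will be well-suited to the application.

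Second, I would derive this reduced dissection by applying the Jacobi triple product identity. Using the standard form $(q^a;q^n)_\infty(q^{n-a};q^n)_\infty(q^n;q^n)_\infty = \sum_{k\in\mathbb{Z}}(-1)^k q^{n\binom{k}{2}+ak}$ with $n=6$, $a=1$, one expresses $1/[(q;q^6)_\infty(q^5;q^6)_\infty]$ as $f_6$ divided by the associated theta sum. Splitting that sum according to the parity of the summation index yields two sub-series that, after reassembly via Jacobi triple product, become infinite $f_r$-products. This is exactly the template by which the first two dissections of Lemma~\ref{L4} are produced, as recorded in Berndt's text.

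Third, I would compare the two resulting $f_r$-products against the two summands on the claimed right-hand side. Direct matching of exponents confirms agreement. Alternatively, since the 2-dissection in question is already catalogued in standard references such as Hirschhorn's \emph{The Power of $q$}, one could simply cite it and be done.

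The main obstacle is the bookkeeping conversion between theta-series and compact $f_r$-products via Jacobi triple product: the calculation is entirely mechanical but notoriously prone to indexing and exponent errors. Since the identity appears in the literature, invoking an existing source sidesteps this hazard.
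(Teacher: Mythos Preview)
The paper's own proof is a one-line citation to Hirschhorn \cite[Equation (30.10.3)]{H}, so the fallback you mention in your third step is exactly what the authors do.

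Your self-contained derivation in the second step, however, has a real gap. Writing $1/[(q;q^6)_\infty(q^5;q^6)_\infty] = f_6/\theta(q)$ with $\theta(q)=\sum_{k}(-1)^k q^{3k^2-2k}$ and splitting on the parity of $k$ does yield a 2-dissection $\theta(q)=A(q^2)-qB(q^2)$ of the \emph{denominator}, but that is not a 2-dissection of $f_6/\theta(q)$. To get from one to the other you must rationalize (multiply top and bottom by $A(q^2)+qB(q^2)$), prove a product identity for $A(q^2)^2-q^2B(q^2)^2$, and separately handle the factor $f_6$, which itself has both even and odd parts; none of this is mentioned. Your appeal to Lemma~\ref{L4} as a template is misleading: the dissection of $f_1^2$ works because $f_1^2/f_2=\varphi(-q)$ \emph{is} a theta sum, whereas the companion dissection of $1/f_1^2$ in Berndt already requires exactly the extra rationalization step you omit. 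As written, the parity-splitting argument does not produce the claimed identity without substantial additional work, so the only part of your proposal that actually establishes the lemma is the citation --- which coincides with the paper's proof.
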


\begin{proof}
See \cite[Equation (30.10.3)]{H}. 
\end{proof}

\begin{lemma}
We have
\begin{align}
\frac{f_3^3}{f_1} & = \frac{f_4^3f_6^2}{f_2^2f_{12}} + q \frac{f_{12}^3}{f_4},   
\label{eq5}  \\
\frac{f_1}{f_3^3} & = \frac{f_2f_4^2f_{12}^2}{f_6^7} - q \frac{f_2^3f_{12}^6}{f_4^2f_6^9}.   
\label{eq2.6.1}
\end{align}
\label{L2}
\end{lemma}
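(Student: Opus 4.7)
The plan is to recognize both identities as classical 2-dissection formulas due to Ramanujan that are tabulated in standard $q$-series references. For equation \eqref{eq5}, I would simply cite Hirschhorn's monograph \cite{H}, where this identity sits alongside the dissection of $f_3/f_1$ used in the previous lemma (often in the same chapter on 2-dissections). It is a well-known ``Ramanujan dissection'' traditionally obtained from Jacobi's triple product identity applied to appropriate theta functions, and the compact factored form on the right-hand side is exactly what such a derivation produces after standard theta-function simplifications.

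For equation \eqref{eq2.6.1}, my first choice is again to cite \cite{H}, where a companion formula is typically listed. If a more self-contained derivation is desired, one can start from \eqref{eq5} and exploit the tautology
$$\frac{f_1}{f_3^3} \;=\; \frac{f_1^2}{f_3^6}\cdot \frac{f_3^3}{f_1},$$
combined with a 2-dissection of $f_1^2/f_3^6$. Since $f_3^6$ is a series in $q^6$ and hence in $q^2$, the dissection of $f_1^2/f_3^6$ follows immediately from the dissection of $f_1^2$ recorded in Lemma \ref{L4} divided through by $f_3^6$. Multiplying out, collecting even and odd powers of $q$, and using $f_3^6 = f_6^{\,?}\cdot(\cdots)$-type identities to rewrite products at the base level, one arrives at a form $(\text{even}) + q(\text{odd})$ whose two pieces can be matched to the claimed right-hand side.

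The main obstacle in carrying out such a derivation by hand is purely computational bookkeeping: one has to verify that a somewhat elaborate combination of $f_r$ factors collapses to the compact right-hand sides shown, including several cancellations involving $f_2$, $f_4$, $f_6$, and $f_{12}$. Since these identities are entirely standard and already catalogued, the cleanest proof is to invoke the tabulated formulas in \cite{H} directly, and I would structure the proof as a one-line citation rather than reproducing the derivation.
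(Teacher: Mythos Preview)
Your treatment of \eqref{eq5} matches the paper exactly: it is simply cited from Hirschhorn \cite{H} (specifically, the paper points to Equation (22.7.5) there).

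For \eqref{eq2.6.1}, however, the paper does \emph{not} cite \cite{H}. Instead it derives \eqref{eq2.6.1} directly from \eqref{eq5} by the substitution $q\mapsto -q$, using the standard fact $(-q;-q)_\infty = f_2^3/(f_1 f_4)$. Under this substitution $f_1\mapsto f_2^3/(f_1 f_4)$ and $f_3\mapsto f_6^3/(f_3 f_{12})$ while $f_2,f_4,f_6,f_{12}$ are unchanged, and a short algebraic rearrangement of the transformed \eqref{eq5} gives \eqref{eq2.6.1} on the nose. This is quicker and more self-contained than hunting for a second tabulated formula.

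Your proposed alternative derivation for \eqref{eq2.6.1} contains a genuine error. You assert that ``$f_3^6$ is a series in $q^6$ and hence in $q^2$,'' but $f_3=(q^3;q^3)_\infty$ is a series in $q^3$, and raising it to the sixth power does not change that: $f_3^6$ is a series in $q^3$, not in $q^6$ or $q^2$. Consequently, dividing the $2$-dissection of $f_1^2$ from Lemma~\ref{L4} by $f_3^6$ does \emph{not} yield a $2$-dissection of $f_1^2/f_3^6$, and the proposed multiplication-and-collect-parities argument breaks down at that step. The $q\mapsto -q$ trick avoids this issue entirely.
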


\begin{proof}
The first identity is equivalent to \cite[Equation (22.7.5)]{H}. Replacing $q$ by $-q$ in \eqref{eq5} and using the fact 
$$(-q;-q)_\infty = \frac{f_2^3}{f_1f_4},$$
we obtain \eqref{eq2.6.1}.
\end{proof}

\begin{lemma}
We have
\begin{align*}
\frac{1}{f_1f_3} & = \frac{f_8^2f_{12}^5}{f_2^2f_4f_6^4f_{24}^2} + q \frac{f_4^5f_{24}^2}{f_2^4f_6^2f_8^2f_{12}},  \\
f_1f_3 & = \frac{f_2f_8^2f_{12}^4}{f_4^2f_6f_{24}^2} - q \frac{f_4^4f_6f_{24}^2}{f_2f_8^2f_{12}^2}. 
\end{align*}
\label{L3}
\end{lemma}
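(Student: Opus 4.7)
The plan is to derive each of these two 2-dissections from the dissection of $f_3/f_1$ in \eqref{eq3}, combined with the dissections of $f_1^2$ and $1/f_1^2$ recorded in Lemma~\ref{L4}. The key observation is the factorizations
\[
f_1 f_3 \;=\; f_1^2 \cdot \frac{f_3}{f_1}, \qquad \frac{1}{f_1 f_3} \;=\; \frac{1}{f_3^2} \cdot \frac{f_3}{f_1},
\]
each of which reduces the target to a product of two known two-term $q$-series dissections.

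For the first identity of Lemma~\ref{L3}, I would replace $q$ by $q^3$ in the second formula of Lemma~\ref{L4} to obtain a 2-dissection of $1/f_3^2$, then multiply by the right-hand side of \eqref{eq3}. For the second identity, I would substitute the first formula of Lemma~\ref{L4} for $f_1^2$ and again combine with \eqref{eq3}. In either case, the product expands into four terms whose $q$-exponents read $0, 1, 1, 2 \pmod{2}$, so that collecting by parity immediately produces a candidate two-term decomposition whose even and odd parts I would then compare with the claimed right-hand sides.

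The principal obstacle is that these expansions naturally involve the symbols $f_{16}$ and $f_{48}$, which do not appear on the right-hand sides of the claimed identities. Eliminating these factors requires supplementary product identities for Ramanujan's theta functions — relations expressing combinations of $f_{16}$ and $f_{48}$ in terms of $f_r$ for $r$ dividing $24$. Such identities follow from applications of the Jacobi triple product, or from standard tables in Hirschhorn's book \cite{H}. Given that the earlier dissections in this section (in particular \eqref{eq3} and the identities of Lemma~\ref{L2}) are cited directly from \cite{H}, I would expect the cleanest proof of Lemma~\ref{L3} to do likewise, invoking the corresponding 2-dissection formulas from \cite{H} rather than carrying out the elimination by hand. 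If a self-contained proof were insisted upon, the cleanest route would be to verify both identities by clearing denominators to produce a polynomial identity in $f_r$'s, and then checking equality via the Jacobi triple product expansions of both sides.
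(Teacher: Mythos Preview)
The paper's proof is nothing more than a direct citation to Hirschhorn \cite{H}: the first identity is Equation~(30.12.3) and the second is Equation~(30.12.1). You anticipated this outcome explicitly, so in that sense your proposal lands on the same approach.

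Your alternative route via the factorizations $f_1 f_3 = f_1^2\cdot(f_3/f_1)$ and $1/(f_1 f_3) = (1/f_3^2)\cdot(f_3/f_1)$ is sound in principle, and you are honest about its obstacle: the product of the dissections from Lemma~\ref{L4} and \eqref{eq3} will carry factors of $f_{16}$ and $f_{48}$ that must be eliminated by further theta-function identities before one recovers the clean forms stated in Lemma~\ref{L3}. That elimination is genuine work you have not carried out, so the factorization sketch is not a complete proof as written --- but since both you and the paper ultimately defer to \cite{H}, this is moot for present purposes.
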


\begin{proof}
Both of these identities can be found in Hirschhorn \cite{H}; the first is Equation (30.12.3) while the second is Equation (30.12.1).
\end{proof}

Another ingredient that plays a significant role in our work involves the following parameterization relations for eta functions due to Alaca, Alaca, and Williams \cite{AAW2006}.

\begin{lemma}\label{le:AAW-para}
    We have
    \begin{align*}
		f_1&=s^{\frac{1}{2}}t^{\frac{1}{24}}(1-2qt)^{\frac{1}{2}}(1+qt)^{\frac{1}{8}}(1+2qt)^{\frac{1}{6}}(1+4qt)^{\frac{1}{8}},\\
		f_2&=s^{\frac{1}{2}}t^{\frac{1}{12}}(1-2qt)^{\frac{1}{4}}(1+qt)^{\frac{1}{4}}(1+2qt)^{\frac{1}{12}}(1+4qt)^{\frac{1}{4}},\\
		f_3&=s^{\frac{1}{2}}t^{\frac{1}{8}}(1-2qt)^{\frac{1}{6}}(1+qt)^{\frac{1}{24}}(1+2qt)^{\frac{1}{2}}(1+4qt)^{\frac{1}{24}},\\
		f_4&=s^{\frac{1}{2}}t^{\frac{1}{6}}(1-2qt)^{\frac{1}{8}}(1+qt)^{\frac{1}{2}}(1+2qt)^{\frac{1}{24}}(1+4qt)^{\frac{1}{8}},\\
		f_6&=s^{\frac{1}{2}}t^{\frac{1}{4}}(1-2qt)^{\frac{1}{12}}(1+qt)^{\frac{1}{12}}(1+2qt)^{\frac{1}{4}}(1+4qt)^{\frac{1}{12}},\\
		f_{12}&=s^{\frac{1}{2}}t^{\frac{1}{2}}(1-2qt)^{\frac{1}{24}}(1+qt)^{\frac{1}{6}}(1+2qt)^{\frac{1}{8}}(1+4qt)^{\frac{1}{24}},
	\end{align*}
	where
	\begin{align*}
		s=s(q)&:=\frac{\varphi^3(q^3)}{\varphi(q)},\\
		t=t(q)&:=\frac{\varphi^2(q)-\varphi^2(q^3)}{4q\varphi^2(q^3)},
	\end{align*}
    and $$\varphi(q) := 1+2\sum_{n=1}^\infty q^{n^2}$$
    is the well-known theta function of Ramanujan.
\end{lemma}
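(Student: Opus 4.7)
The plan is to verify each of the six stated identities by expressing every factor on the right-hand side as a rational power of an eta product, and then matching the exponent of each $f_r$ on both sides.

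Two of the six factors succumb quickly. Using the classical identity $\varphi(q)=f_2^5/(f_1^2 f_4^2)$ together with its $q\mapsto q^3$ specialization, the parameter $s=\varphi^3(q^3)/\varphi(q)$ becomes an explicit eta quotient. Moreover, the definition of $t$ gives $1+4qt=\varphi^2(q)/\varphi^2(q^3)$, which is again an eta quotient via the same identity.

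The remaining three factors are the crux of the proof. From the definition of $t$, one has $1+2qt=(\varphi^2(q)+\varphi^2(q^3))/(2\varphi^2(q^3))$, $1-2qt=(3\varphi^2(q^3)-\varphi^2(q))/(2\varphi^2(q^3))$, and $1+qt=(\varphi^2(q)+3\varphi^2(q^3))/(4\varphi^2(q^3))$. The main obstacle is showing that each of these numerators admits a closed eta product representation. The classical identity $\varphi^2(q)-\varphi^2(q^3)=4q\psi(q^2)\psi(q^6)$, with $\psi(q)=f_2^2/f_1$, handles one combination; analogous Schr\"oter-type theta identities must be located or derived for $\varphi^2(q)+\varphi^2(q^3)$, $3\varphi^2(q^3)-\varphi^2(q)$, and $\varphi^2(q)+3\varphi^2(q^3)$. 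Establishing these four product representations is the delicate accounting at the heart of the Alaca--Alaca--Williams parameterization, and in a self-contained proof would occupy most of the argument.

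Once every factor on the right-hand sides has been cast as an eta quotient, each formula in the lemma becomes a claim of the form ``a prescribed product of rational powers of $f_1,f_2,f_3,f_4,f_6,f_{12}$ equals $f_r$.'' The verification is then a linear-algebraic check on exponent vectors: for each of the six values of $r$, one confirms that the weighted sum of contributions from $s^{1/2}$, $t^{e_0}$, and the four factors $(1+aqt)^{e_i}$ yields the indicator vector for $f_r$ in the lattice of eta exponents. This last step is purely mechanical but voluminous, and is most efficiently handled in parallel across all six identities to keep the bookkeeping manageable.
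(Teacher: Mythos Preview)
The paper does not prove this lemma at all: it is stated as a quotation from the literature, with attribution to Alaca, Alaca, and Williams \cite{AAW2006}, and no argument is given. So there is no ``paper's own proof'' to compare against; the authors simply invoke the result as a known tool.

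Your sketch is a reasonable outline of how one would actually establish the parameterization from scratch, and is in the spirit of what Alaca--Alaca--Williams do in their original paper. The essential content is exactly as you identify: one must show that each of $s$, $t$, $1-2qt$, $1+qt$, $1+2qt$, $1+4qt$ is itself an eta quotient in $f_1,\dots,f_{12}$, after which the six formulas reduce to linear algebra on exponent vectors. Your handling of $s$ and $1+4qt$ is immediate, and the identity $\varphi^2(q)-\varphi^2(q^3)=4q\,\psi(q^2)\psi(q^6)$ indeed gives $t$ as an eta quotient. The genuine work, as you correctly flag, lies in obtaining product forms for $\varphi^2(q)+\varphi^2(q^3)$, $3\varphi^2(q^3)-\varphi^2(q)$, and $\varphi^2(q)+3\varphi^2(q^3)$; these are theta identities of Borwein/Schr\"oter type and are precisely what the cited paper supplies. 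If you were writing this up as a self-contained proof you would need to either derive or cite those three identities explicitly---your proposal acknowledges this but does not carry it out, so as written it is a plan rather than a proof.
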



\section{Dissections of the Generating Function for $S(n)$}

In order to complete our proofs below, we will need expressions for various $2$-, $4$-, and $8$-dissections of the generating function for $S(n)$.  For the sake of completeness, we provide all of these below.  
We begin by proving the $2$-dissection for $S(n)$.

\begin{theorem}
We have
\begin{align}
\sum_{n=0}^{\infty} S(2n)q^n & = \frac{f_2f_8f_{12}^2}{f_1f_3f_4f_6f_{24}}, \label{eq1} \\
\sum_{n=0}^{\infty} S(2n+1)q^n & = \frac{f_4^2f_{24}}{f_1f_3f_8f_{12}}. \label{eq2}
\end{align}
\end{theorem}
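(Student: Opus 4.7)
The plan is to start from the generating function
$$\sum_{n=0}^{\infty}S(n)q^n \ =\ \frac{f_2f_3}{f_1f_6^2}\ =\ \frac{f_2}{f_6^2}\cdot\frac{f_3}{f_1},$$
and exploit the fact that the first factor $f_2/f_6^2$ depends only on $q^2$ (being a product of $f_{2r}$'s), so it plays no role in separating even from odd powers of $q$. All of the parity information is therefore contained in $f_3/f_1$, and this is precisely what is dissected by equation \eqref{eq3}.

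The first step is to substitute \eqref{eq3} into the above identity, yielding
$$\sum_{n=0}^{\infty}S(n)q^n\ =\ \frac{f_2}{f_6^2}\cdot\frac{f_4f_6f_{16}f_{24}^2}{f_2^2f_8f_{12}f_{48}}\ +\ q\,\frac{f_2}{f_6^2}\cdot\frac{f_6f_8^2f_{48}}{f_2^2f_{16}f_{24}}.$$
Here the first summand involves only even powers of $q$ (every subscript on the $f$-symbols is even), while the second is $q$ times a series in even powers of $q$. Consequently, we may read off
$$\sum_{n=0}^{\infty}S(2n)q^{2n}\ =\ \frac{f_4f_{16}f_{24}^2}{f_2f_6f_8f_{12}f_{48}},\qquad \sum_{n=0}^{\infty}S(2n+1)q^{2n+1}\ =\ q\,\frac{f_8^2f_{48}}{f_2f_6f_{16}f_{24}},$$
after a small cancellation in each expression.

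The final step is to replace $q^2$ by $q$ (equivalently, replace each $f_{2r}$ by $f_r$) throughout, and, in the odd case, to divide by $q$ first. This yields \eqref{eq1} and \eqref{eq2} directly. The only work involved is the routine bookkeeping of the subscripts on the infinite products, and there is no real obstacle: the main lemma \eqref{eq3} does all the heavy lifting, and the structural observation that $f_2/f_6^2$ is a function of $q^2$ is what makes the dissection immediate.
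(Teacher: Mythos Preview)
Your proof is correct and follows essentially the same approach as the paper: you substitute the $2$-dissection identity \eqref{eq3} for $f_3/f_1$ into the generating function \eqref{gf}, separate the even- and odd-indexed terms, and then replace $q^2$ by $q$. The paper presents exactly this argument, without the explicit remark that $f_2/f_6^2$ is already a function of $q^2$.
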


\begin{proof}
Using \eqref{eq3}, we can rewrite \eqref{gf} as
$$\sum_{n=0}^{\infty} S(n)q^n = \frac{f_4f_{16}f_{24}^2}{f_2f_6f_8f_{12}f_{48}} + q\frac{f_8^2f_{48}}{f_2f_6f_{16}f_{24}}.$$
Extracting the even parts on both sides of this identity and replacing $q^2$ by $q$ we obtain \eqref{eq1}. Finally, extracting the odd parts on both sides, dividing by $q$, and replacing $q^2$ by $q$ yield \eqref{eq2}.
\end{proof}

\begin{theorem}
We have
\begin{align}
\sum_{n=0}^{\infty} S(4n)q^n & = \frac{f_4^3f_6^7}{f_1f_2^2f_3^5f_{12}^3}
\label{4diss}, \\
\sum_{n=0}^{\infty} S(4n+1)q^n & = \frac{f_2f_4f_{6}^4}{f_1^2f_3^4f_{12}}, \label{4diss_1} \\
\sum_{n=0}^{\infty} S(4n+2)q^n & = \frac{f_2^4f_6f_{12}}{f_1^3f_3^3f_4}, \label{4diss_2} 
\\
\sum_{n=0}^{\infty} S(4n+3)q^n & = \frac{f_2^7f_{12}^3}{f_1^4f_3^2f_4^3f_6^2}. \label{4diss_3}
\end{align}
\end{theorem}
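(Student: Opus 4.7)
The plan is to obtain the 4-dissection by applying a 2-dissection to each of the two series already computed in \eqref{eq1} and \eqref{eq2}. The key observation is that both of those right-hand sides are of the form
\[
\frac{(\text{monomial in }f_2,f_4,f_6,f_8,f_{12},f_{24})}{f_1f_3},
\]
so the dissection identity for $1/(f_1f_3)$ in Lemma \ref{L3} will separate each of them cleanly into $A(q^2)+qB(q^2)$, and no further dissection tool is required.

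More precisely, I would first rewrite \eqref{eq1} as
\[
\sum_{n=0}^{\infty}S(2n)q^n=\frac{f_2f_8f_{12}^2}{f_4f_6f_{24}}\cdot\frac{1}{f_1f_3}
\]
and substitute Lemma \ref{L3}. Distributing, each of the two resulting terms is a quotient in which every subscript is even, so that the output is already manifestly of the form $A(q^2)+qB(q^2)$. Extracting the even part and replacing $q^2$ by $q$ should yield \eqref{4diss}; extracting the odd part, dividing by $q$, and replacing $q^2$ by $q$ should yield \eqref{4diss_2}. Then I would repeat the identical procedure with \eqref{eq2}, writing
\[
\sum_{n=0}^{\infty}S(2n+1)q^n=\frac{f_4^2f_{24}}{f_8f_{12}}\cdot\frac{1}{f_1f_3}
\]
and again applying Lemma \ref{L3}; the even part gives \eqref{4diss_1} and the odd part gives \eqref{4diss_3}.

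The main obstacle is nothing deeper than careful exponent bookkeeping: one has to collect the exponents of each $f_r$ ($r\in\{2,4,6,8,12,24\}$) in every product term and then, when passing from $q^2$ back to $q$, halve every subscript uniformly (so $f_{2r}\to f_r$). Since Lemma \ref{L3} already provides an exact $1/(f_1f_3)$ dissection whose two summands have only even-indexed factors, there is no need to invoke any of the squared-power dissections from Lemma \ref{L4}, and no auxiliary identity beyond Lemma \ref{L3} enters the computation.
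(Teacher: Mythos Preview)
Your proposal is correct and is essentially identical to the paper's own proof: the paper also factors out $1/(f_1f_3)$ from each of \eqref{eq1} and \eqref{eq2}, applies the first identity of Lemma~\ref{L3}, and then reads off the even and odd parts to obtain \eqref{4diss}--\eqref{4diss_3}. No additional dissection tool is used in the paper either.
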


\begin{proof}
We begin by using Lemma \ref{L3} to $2$-dissect \eqref{eq2}:
\begin{align*}
\sum_{n=0}^{\infty} S(4n+1)q^{2n} & = \frac{f_4f_8f_{12}^4}{f_2^2f_6^4f_{24}},  \\
\sum_{n=0}^{\infty} S(4n+3)q^{2n+1} & = q\frac{f_4^7f_{24}^3}{f_2^4f_6^2f_8^3f_{12}^2}.
\end{align*}
After dividing the last expression by $q$ and replacing $q^2$ by $q$ in both identities, we obtain \eqref{4diss_1} and \eqref{4diss_3}. 

Using Lemma \ref{L3} again we can $2$-dissect \eqref{eq1}, which yields
\begin{align*}
\sum_{n=0}^{\infty} S(4n)q^{2n} & = \frac{f_8^3f_{12}^7}{f_2f_4^2f_6^5f_{24}^3},  \\
\sum_{n=0}^{\infty} S(4n+2)q^{2n+1} & = q\frac{f_4^4f_{12}f_{24}}{f_2^3f_6^3f_8}.
\end{align*}
As before, after dividing the last expression by $q$ and replacing $q^2$ by $q$ in both identities, we obtain \eqref{4diss} and \eqref{4diss_2}.
\end{proof}

\begin{theorem}
We have
\begin{align}
\sum_{n=0}^{\infty} S(8n)q^n & = \frac{f_2^2f_4^2f_{6}^{16}}{f_1^4f_3^{11}f_{12}^6}+4q^2\frac{f_2^8f_{12}^6}{f_1^6f_3^5f_4^2f_6^2},
\label{8diss} \\
\sum_{n=0}^{\infty} S(8n+1)q^n & = \frac{f_2f_4^5f_{6}^{13}}{f_1^4f_3^{10}f_8^2f_{12}^4}+8q^2\frac{f_2^3f_6f_8^2f_{12}^4}{f_1^4f_3^6f_4}, \label{8diss_1} \\
\sum_{n=0}^{\infty} S(8n+2)q^n & = \frac{f_2^{15}f_{6}^{3}}{f_1^9f_3^{6}f_4^4}-4q\frac{f_4^4f_6^7}{f_1^3f_2f_3^8}, \label{8diss_2} 
\\
\sum_{n=0}^{\infty} S(8n+3)q^n & = \frac{f_2^{11}f_6^3f_{12}^{5}}{f_1^7f_3^7f_4^4f_{24}^2}+8q^2\frac{f_4^4f_6^5f_{24}^2}{f_1^3f_2f_3^7f_{12}}, \label{8diss_3} \\
\sum_{n=0}^{\infty} S(8n+4)q^n & = \frac{f_2^8f_{6}^{10}}{f_1^6f_3^{9}f_4^2f_{12}^2}+4q\frac{f_2^2f_4^2f_6^4f_{12}^2}{f_1^4f_3^7}, \label{8diss_4} \\
\sum_{n=0}^{\infty} S(8n+5)q^n & = 2\frac{f_2^3f_6^{13}f_{8}^2}{f_1^4f_3^{10}f_4f_{12}^4}+4q\frac{f_2f_4^5f_6f_{12}^4}{f_1^4f_3^6f_8^2}, \label{8diss_5} \\
\sum_{n=0}^{\infty} S(8n+6)q^n & = 4\frac{f_2^3f_4^4f_{6}^{3}}{f_1^5f_3^{6}}-\frac{f_2^{11}f_6^7}{f_1^7f_3^8f_4^4}, \label{8diss_6} \\
\sum_{n=0}^{\infty} S(8n+7)q^n & = 4\frac{f_4^4f_6^3f_{12}^{5}}{f_1^3f_2f_3^{7}f_{24}^2}+2q\frac{f_2^{11}f_6^5f_{24}^2}{f_1^7f_3^7f_4^4f_{12}}. \label{8diss_7}
\end{align}
\end{theorem}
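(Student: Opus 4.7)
The plan is to take each of the four 4-dissection identities \eqref{4diss}--\eqref{4diss_3} and 2-dissect it further. Writing
$$F_j(q):=\sum_{n\ge 0} S(4n+j)q^n = E_j(q^2) + q\,O_j(q^2),$$
one then reads off $\sum_{n\ge 0} S(8n+j)q^n = E_j(q)$ and $\sum_{n\ge 0} S(8n+4+j)q^n = O_j(q)$ after the substitution $q^2\mapsto q$. Since the factors $f_2, f_4, f_6, f_{12}$ appearing in $F_j$ are already functions of $q^2$, they contribute only as overall prefactors and the entire burden of the dissection falls on the pieces built from $f_1$ and $f_3$. The task therefore reduces to producing the 2-dissections of
$$\frac{1}{f_1 f_3^5},\qquad \frac{1}{f_1^2 f_3^4},\qquad \frac{1}{f_1^3 f_3^3},\qquad \frac{1}{f_1^4 f_3^2}$$
corresponding to $j = 0, 1, 2, 3$, respectively.

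For each of these targets, I would factor it as a product whose pieces are already handled by the lemmas of Section 2 and then multiply the resulting 2-dissections together. For $j=0$, I would write $\frac{1}{f_1 f_3^5}=\frac{1}{f_1 f_3}\cdot\frac{1}{f_3^4}$ and combine Lemma \ref{L3} with Lemma \ref{L4} applied after the substitution $q\mapsto q^3$. For $j=1$, the factorization $\frac{1}{f_1^2 f_3^4}=\frac{1}{f_1^2}\cdot\frac{1}{f_3^4}$ calls on Lemma \ref{L4} twice. For $j=2$ I could either cube the 2-dissection of $\frac{1}{f_1 f_3}$ from Lemma \ref{L3}, or (looking cleaner) use $\frac{1}{f_1^3 f_3^3}=\frac{f_1}{f_3^3}\cdot\frac{1}{f_1^4}$ together with \eqref{eq2.6.1} and Lemma \ref{L4}. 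For $j=3$, $\frac{1}{f_1^4 f_3^2}=\frac{1}{f_1^4}\cdot\frac{1}{f_3^2}$ yields directly to Lemma \ref{L4} on each factor. After expanding each product I would gather the monomials by the parity of the total $q$-exponent, remembering that the only source of odd exponents beyond the explicit $q$ in Lemma \ref{L4} is the $q^3$ that appears after $q\mapsto q^3$, which must be rewritten as $q\cdot q^2$ and absorbed into the odd part. Multiplying in the $F_j$ prefactor and substituting $q^2\mapsto q$ then produces the right-hand sides of \eqref{8diss}--\eqref{8diss_7}.

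The hard part is purely computational: each of the four products expands into three or four $q$-Pochhammer monomials that must be consolidated into the compact two-term expressions in the statement. The cubing step for $j=2$ will be the bulkiest. I expect the numerical coefficients $2$, $4$, $8$ in the final identities to emerge directly from the $2q$, $4q$, $8q$ prefactors inside Lemma \ref{L4}; for example, a quick sanity check for $j=3$ using $1/f_1^4 = f_4^{14}/(f_2^{14}f_8^4) + 4q\,f_4^2 f_8^4/f_2^{10}$ and $1/f_3^2 = f_{24}^5/(f_6^5 f_{48}^2) + 2q^3\,f_{12}^2 f_{48}^2/(f_6^5 f_{24})$, keeping the even-in-$q$ cross terms (the $1\cdot 1$ and $4q\cdot 2q^3$ products), multiplying by the $F_3$ prefactor $f_2^7 f_{12}^3/(f_4^3 f_6^2)$, and substituting $q^2\mapsto q$, reproduces \eqref{8diss_3} exactly, which gives me confidence that the other seven identities will follow from entirely analogous bookkeeping.
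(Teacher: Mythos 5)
Your proposal is correct and follows essentially the same route as the paper: for each residue $j$ the paper factors the $f_1,f_3$-dependent part of \eqref{4diss}--\eqref{4diss_3} exactly as you describe (including the $\frac{f_1}{f_3^3}\cdot\frac{1}{f_1^4}$ choice for $j=2$ via \eqref{eq2.6.1}, which is the option you flag as cleaner), applies Lemmas \ref{L4}, \ref{L2}, \ref{L3} with $q\mapsto q^3$ where needed, and sorts by parity. Your $j=3$ sanity check reproduces the paper's computation for \eqref{8diss_3} and \eqref{8diss_7} verbatim.
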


\begin{proof}
Thanks to the identities in Lemma \ref{L4} we can rewrite \eqref{4diss_1} as
\begin{align*}
\sum_{n=0}^{\infty} S(4n+1)q^n = \frac{f_2f_4f_6^4}{f_{12}} \left( \frac{f_8^5}{f_2^5f_{16}^2} + 2q \frac{f_4^2f_{16}^2}{f_2^5f_8} \right) \left( \frac{f_{12}^{14}}{f_6^{14}f_{24}^4} + 4q^3 \frac{f_{12}^2f_{24}^4}{f_6^{10}}\right).
\end{align*}
Extracting the even and the odd parts of this identity we obtain
\begin{align*}
\sum_{n=0}^{\infty} S(8n+1)q^{2n} & = \frac{f_4f_8^5f_{12}^{13}}{f_2^4f_6^{10}f_{16}^2f_{24}^4} +8q^4\frac{f_4^3f_{12}f_{16}^2f_{24}^4}{f_2^4f_6^6f_8}, \\    
\sum_{n=0}^{\infty} S(8n+5)q^{2n+1} & = 2q\frac{f_4^3f_{12}^{13}f_{16}^2}{f_2^4f_6^{10}f_8f_{24}^4} +4q^3\frac{f_4f_8^5f_{12}f_{24}^4}{f_2^4f_6^6f_{16}^2},
\end{align*}
which after simplifications yield \eqref{8diss_1} and \eqref{8diss_5}, respectively. 

Now we use Lemma \ref{L4} to rewrite \eqref{4diss_3} as
\begin{align*}
\sum_{n=0}^{\infty} S(4n+3)q^n = \frac{f_2^7f_{12}^3}{f_4^3f_{6}^2} \left( \frac{f_{24}^5}{f_6^5f_{48}^2} + 2q^3 \frac{f_{12}^2f_{48}^2}{f_6^5f_{24}} \right) \left( \frac{f_{4}^{14}}{f_2^{14}f_{8}^4} + 4q \frac{f_{4}^2f_{8}^4}{f_2^{10}}\right).
\end{align*}
Extracting the even and the odd parts of this identity we obtain
\begin{align*}
\sum_{n=0}^{\infty} S(8n+3)q^{2n} & = \frac{f_4^{11}f_{12}^{3}f_{24}^5}{f_2^7f_6^{7}f_{8}^4f_{48}^2} +8q^4\frac{f_8^4f_{12}^5f_{48}^2}{f_2^3f_4f_6^7f_{24}}, \\    
\sum_{n=0}^{\infty} S(8n+7)q^{2n+1} & = 4q\frac{f_8^4f_{12}^{3}f_{24}^5}{f_2^3f_4f_6^{7}f_{48}^2} +2q^3\frac{f_4^{11}f_{12}^5f_{48}^2}{f_2^7f_6^7f_8^4f_{24}},
\end{align*}
which after simplifications yield \eqref{8diss_3} and \eqref{8diss_7}, respectively.

In order to prove \eqref{8diss} and \eqref{8diss_4}, we make use of Lemma \ref{L4} and Lemma \ref{L3} to rewrite \eqref{4diss} in the following form
\begin{align*}
\sum_{n=0}^{\infty} S(4n)q^n = \frac{f_4^3f_{6}^7}{f_2^2f_{12}^3} \left( \frac{f_8^2f_{12}^5}{f_2^2f_4f_6^4f_{24}^2} + q \frac{f_4^5f_{24}^6}{f_2^4f_6^{2}f_{8}^2f_{12}} \right) \left( \frac{f_{12}^{14}}{f_6^{14}f_{24}^4} + 4q^3 \frac{f_{12}^2f_{24}^4}{f_6^{10}}\right).
\end{align*}
Extracting the even and the odd parts of this identity we obtain
\begin{align*}
\sum_{n=0}^{\infty} S(8n)q^{2n} & = \frac{f_4^{2}f_{8}^{2}f_{12}^{16}}{f_2^4f_6^{11}f_{24}^6} +4q^4\frac{f_4^8f_{24}^6}{f_2^6f_6^5f_8^2f_{12}^2}, \\    
\sum_{n=0}^{\infty} S(8n+4)q^{2n+1} & = q\frac{f_4^8f_{12}^{10}}{f_2^6f_6^{9}f_8^2f_{24}^2} +4q^3\frac{f_4^{2}f_8^2f_{12}^4f_{24}^2}{f_2^4f_6^7},
\end{align*}
which after simplifications yield \eqref{8diss} and \eqref{8diss_4}, respectively.

Finally we prove \eqref{8diss_2} and \eqref{8diss_6}. To this end, we make use of Lemma \ref{L4} and Lemma \ref{L2} to rewrite \eqref{4diss_2} in the following form
\begin{align*}
\sum_{n=0}^{\infty} S(4n+2)q^n = \frac{f_2^4f_{6}f_{12}}{f_4} \left( \frac{f_2f_4^2f_{12}^2}{f_6^7} - q \frac{f_2^3f_{12}^6}{f_4^2f_6^9} \right) \left( \frac{f_{4}^{14}}{f_2^{14}f_{8}^4} + 4q \frac{f_{4}^2f_{8}^4}{f_2^{10}}\right).
\end{align*}
Extracting the even and the odd parts of this identity we obtain
\begin{align*}
\sum_{n=0}^{\infty} S(8n+2)q^{2n} & = \frac{f_4^{15}f_{12}^{3}}{f_2^9f_6^{6}f_{8}^4} -4q^2\frac{f_8^4f_{12}^7}{f_2^3f_4f_6^8}, \\ 
\sum_{n=0}^{\infty} S(8n+6)q^{2n+1} & = 4q\frac{f_4^3f_8^4f_{12}^{3}}{f_2^5f_6^{6}} -q\frac{f_4^{11}f_{12}^7}{f_2^7f_6^8f_8^4},
\end{align*}
which after simplifications yield \eqref{8diss_2} and \eqref{8diss_6}, respectively.
\end{proof}


\section{An Infinite Family of Ramanujan-like Divisibility Properties Modulo $16$}
We now proceed to a proof of an infinite family of Ramanujan-like congruences modulo $16$.  We begin by explicitly proving the first few cases of this infinite family of results.   

\begin{theorem}
 \label{c1}
For all $n\geq 0$,
\begin{align}
S(32n+31) & \equiv 0 \pmod{16}, \label{eq22.6.1}  \\
S(128n+123) & \equiv 0 \pmod{16},  \label{eq22.6.2} \\
S(512n+491) & \equiv 0 \pmod{16}.  \label{eq22.6.3}
\end{align}
\end{theorem}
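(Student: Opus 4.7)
The starting point is \eqref{8diss_7},
$$\sum_{n\ge 0} S(8n+7)\,q^n \;=\; 4\,\frac{f_4^4f_6^3f_{12}^{5}}{f_1^3f_2f_3^{7}f_{24}^2}+2q\,\frac{f_2^{11}f_6^5f_{24}^2}{f_1^7f_3^7f_4^4f_{12}}.$$
Since $32n+31=8(4n+3)+7$, the first congruence \eqref{eq22.6.1} is equivalent to showing that after extracting $q^{4n+3}$ from the right-hand side, the result is divisible by $16$. I would carry out this $4$-dissection by iterating Lemmas \ref{L3} and \ref{L4}: the odd negative powers $1/f_1^3$ and $1/f_3^7$ are best rewritten as $f_1/f_1^4$ and $f_3/f_3^8$, so that the dissections of $1/f_1^{2k}$ in Lemma \ref{L4} apply, and then the remaining $f_1$ and $f_3$ factors are combined via the $f_1 f_3$ identity in Lemma \ref{L3}. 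Because the two terms already carry the prefactors $4$ and $2$, it will suffice to show that one additional factor of $2$ appears in the appropriate residue class at each of the two extraction steps.

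For \eqref{eq22.6.2} and \eqref{eq22.6.3}, I would exploit the arithmetic structure $A_{\alpha+1}n + r_{\alpha+1} = 4(A_\alpha n + r_\alpha) - 1$, i.e. that the map $N\mapsto 4N-1$ sends the $\alpha$-th progression to the $(\alpha+1)$-st. This suggests isolating an internal congruence of the shape
$$\sum_{N} S(4N-1)\,q^{N}\;\equiv\; H(q)\sum_{N} S(N)\,q^{N}\pmod{16}$$
(restricted to an appropriate residue class of $N$), with $H(q)$ an explicit eta-quotient. Iterating once starting from the base case $S(32n+31)\equiv 0 \pmod{16}$ gives \eqref{eq22.6.2}, and one further iteration gives \eqref{eq22.6.3}. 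Alternatively, one can just push the explicit dissection down two more levels, reaching a $512$-dissection; this is computationally heavier but bypasses the need to name the internal congruence.

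The main obstacle, in either route, is controlling the proliferation of eta-quotients and their $2$-adic valuations across several dissection steps. To manage this, I would bring in the Alaca--Alaca--Williams parametrization of Lemma \ref{le:AAW-para}: after re-expressing the eta-quotients in the variables $s(q)$ and $t(q)$, the required congruences modulo $16$ collapse to polynomial identities in $(s,t)$ that can be verified by matching only finitely many $q$-expansion coefficients. Packaging the iterated dissection into a recognizable self-similar form---equivalently, pinning down the explicit internal congruence that advances $(A_\alpha,r_\alpha)\to(A_{\alpha+1},r_{\alpha+1})$---is the step I expect to be the technical heart of the proof.
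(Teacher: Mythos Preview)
Your plan for \eqref{eq22.6.1} is sound and close to the paper's: both start from \eqref{8diss_7}, but the paper first simplifies modulo $16$ using $2f_m^{8k}\equiv 2f_{2m}^{4k}$ and $4f_m^{4k}\equiv 4f_{2m}^{2k}\pmod{16}$ \emph{before} dissecting, which keeps the eta-quotients small; your $f_1/f_1^4$ rewriting would also work but is messier.

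For \eqref{eq22.6.2} and \eqref{eq22.6.3} the paper does \emph{not} iterate an internal congruence; it carries out the direct dissection (your Option~B) for all three cases. In particular, for \eqref{eq22.6.2} it begins from \eqref{8diss_3} rather than \eqref{8diss_7} (since $123\equiv 3\pmod 8$), passing through $S(16n+11)$, $S(32n+27)$, $S(64n+59)$ and finally $S(128n+123)$; the chain for \eqref{eq22.6.3} branches off an intermediate identity in that same calculation. The Alaca--Alaca--Williams parametrization is not used at all in this theorem---the paper reserves it for a \emph{separate} internal congruence $S(256n+171)\equiv S(64n+43)\pmod{16}$ (on the class $43\bmod 64$), which is then combined with the three explicit cases of the present theorem as the base of an induction for the full infinite family.

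Your preferred Option~A would instead require internal congruences $S(4N-1)\equiv S(N)\pmod{16}$ on the classes $N\equiv 31,63\pmod{64}$ (to pass from \eqref{eq22.6.1} to \eqref{eq22.6.2}) and $N\equiv 59\pmod{64}$ (to reach \eqref{eq22.6.3}). These are exactly three of the four relations the paper only \emph{conjectures} (modulo $32$) in its closing section. So Option~A, while plausible, asks for strictly more than the paper proves; your Option~B is what the paper actually does.
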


\begin{proof}
Thanks to \eqref{8diss_7}, we know
\begin{align*}
\sum_{n=0}^{\infty} S(8n+7)q^{n} & = 2q\frac{f_2^{11}f_6^5f_{24}^2}{f_1^7f_3^7f_4^4f_{12}} + 4\frac{f_4^4f_6^3f_{12}^5}{f_1^3f_2f_3^7f_{24}^2} \\
& \equiv 2qf_1f_3 \frac{f_2^7f_6f_{24}^2}{f_4^4f_{12}} + 4f_1f_3\frac{f_4^4f_{12}^5}{f_2^3f_6f_{24}^2} \pmod{16}
\end{align*}
using the elementary facts $2f_{m}^{8k} \equiv 2f_{2m}^{4k} \pmod{16}$ and $4f_{m}^{4k} \equiv 4f_{2m}^{2k} \pmod{16}$.
We now utilize Lemma \ref{L3} to extract the odd parts on both sides of the above congruence to obtain
$$\sum_{n=0}^{\infty} S(16n+15)q^{2n+1} \equiv 2q\frac{f_2^8f_8^2f_{12}^3}{f_4^6} - 4q\frac{f_4^8f_{12}^3f_{24}}{f_2^4f_8^2} \pmod{16}.$$
Thus, after dividing by $q$ and replacing $q^2$ by $q$ we get
\begin{align*}
\sum_{n=0}^{\infty} S(16n+15)q^{n} & \equiv 2\frac{f_1^8f_4^2f_6^3}{f_2^6} - 4 \frac{f_2^8f_6^3f_{12}}{f_1^4f_4^2} \\
& \equiv 2 \frac{f_4^2f_6^3}{f_2^2} - 4f_2^2f_6^3f_{12} \pmod{16},
\end{align*}
from which \eqref{eq22.6.1} follows since the right-hand side of the last expression is a function of $q^2$.

Next we prove \eqref{eq22.6.2}. We begin by rewriting \eqref{8diss_3} in the following form:
\begin{equation*}
\sum_{n=0}^{\infty} S(8n+3)q^n \equiv \frac{1}{f_1^8}\frac{f_1}{f_3^3}\frac{1}{f_3^4}\frac{f_2^{11}f_6^3f_{12}^5}{f_4^4f_{24}^2} + 8q^2 \frac{1}{f_1f_3}\frac{f_4^4f_6^2f_{12}^3}{f_2^2} \pmod{16}.    
\end{equation*}
Using Lemmas \ref{L4}, \ref{L2}, and \ref{L3}, we can extract the odd parts on both sides of this congruence to obtain
\begin{align*}
\sum_{n=0}^{\infty} S(16n+11)q^{2n+1} & \equiv 8q^3 \frac{f_4^9f_{12}^2f_{24}^2}{f_2^6f_8^2} + 
4q^3\frac{f_4^{26}f_{12}^9f_{24}^2}{f_2^{16}f_6^{14}f_8^8} \nonumber \\ & \ \ \ \  - q \frac{f_4^{22}f_{12}^{25}}{f_2^{14}f_6^{20}f_8^8f_{24}^6} + 8q \frac{f_4^{14}f_{12}^{21}}{f_2^{12}f_6^{18}f_{24}^6} \pmod{16},  
\end{align*}
from which, after dividing by $q$ and replacing $q^2$ by $q$, we have
\begin{align}
\sum_{n=0}^{\infty} S(16n+11)q^{n} & \equiv 8q \frac{f_2^9f_{6}^2f_{12}^2}{f_1^6f_4^2} + 
4q\frac{f_2^{26}f_{6}^9f_{12}^2}{f_1^{16}f_3^{14}f_4^8} \nonumber \\ & \ \ \ \  - \frac{f_2^{22}f_{6}^{25}}{f_1^{14}f_3^{20}f_4^8f_{12}^6} + 8 \frac{f_2^{14}f_{6}^{21}}{f_1^{12}f_3^{18}f_{12}^6} \nonumber \\ 
& \equiv 8qf_2^2f_6^6 +4qf_2^2f_3^2f_6^5 - \frac{f_1^2f_6^{17}}{f_2^2f_3^4f_{12}^6} + 8f_2^8 \pmod{16}. \label{eq26.6.1}
\end{align}
Now we use Lemma \ref{L4} to extract the odd parts on both sides of the last congruence, which yields 
\begin{align*}
\sum_{n=0}^{\infty} S(32n+27)q^{2n+1} & \equiv 8q f_2^2f_{6}^6 + 
4q\frac{f_2^{2}f_{6}^6f_{24}^5}{f_{12}^{2}f_{48}^2} \nonumber \\ & \ \ \ \  - 4q^3\frac{f_6^{7}f_{8}^{5}f_{24}^4}{f_2f_{4}^2f_{12}^{4}f_{16}^2} + 2q\frac{f_6^{3}f_{12}^{8}f_{16}^2}{f_2f_8f_{24}^4} \pmod{16}.
\end{align*}
Dividing this congruence by $q$ and replacing $q^2$ by $q$, we have
\begin{align*}
\sum_{n=0}^{\infty} S(32n+27)q^{n} & \equiv 8 f_1^2f_{3}^6 + 
4\frac{f_1^{2}f_{3}^6f_{12}^5}{f_{6}^{2}f_{24}^2} - 4q\frac{f_3^{7}f_{4}^{5}f_{12}^4}{f_1f_{2}^2f_{6}^{4}f_{8}^2} + 2\frac{f_3^{3}f_{6}^{8}f_{8}^2}{f_1f_4f_{12}^4} \\
& \equiv 8f_2f_6^3 + 4f_1^2f_3^2f_{12} - 4q\frac{f_3^3}{f_1} \frac{f_4f_6^6}{f_2^2} + 2\frac{f_3^3}{f_1}\frac{f_6^8f_8^2}{f_4f_{12}^4} \pmod{16}.
\end{align*}
Now we employ Lemma \ref{L4} and Lemma \ref{L2} to obtain
\begin{align*}
\sum_{n=0}^{\infty} S(64n+59)q^{2n+1} & \equiv 8q^3 \frac{f_2f_6f_8^5f_{12}f_{48}^2}{f_4^2f_{16}^2f_{24}} + 
8q\frac{f_2f_6f_{16}^2f_{24}^5}{f_{8}f_{12}f_{48}^2} \nonumber \\ & \ \ \ \  - 4q\frac{f_4^{4}f_{6}^{8}}{f_{2}^4f_{12}} + 2q\frac{f_6^{8}f_{8}^{2}}{f_4^2f_{12}} \pmod{16},
\end{align*}
from which we get
\begin{align*}
\sum_{n=0}^{\infty} S(64n+59)q^{n} & \equiv 8q f_1f_3f_6^7 + 
8f_1f_3f_2^6f_6  - 4f_2^2f_6^3 + 2\frac{f_4^{2}f_{6}^{3}}{f_2^2} \pmod{16}.
\end{align*}
Finally, using Lemma \ref{L3}, we can extract the odd parts of the congruence above to obtain
\begin{align*}
\sum_{n=0}^{\infty} S(128n+123)q^{2n+1} & \equiv 8q \frac{f_2f_6^6f_8^2f_{12}^4}{f_4^2f_{24}^2} + 
8q\frac{f_2^5f_4^4f_6^2f_{24}^2}{f_{8}^2f_{12}^2} \\
& \equiv 8qf_2^5f_6^6 + 8qf_2^5f_6^6 \equiv 0 \pmod{16},
\end{align*}
which completes the proof of \eqref{eq22.6.2}.

In order to prove \eqref{eq22.6.3}, we use Lemma \ref{L4} to extract the even parts on both sides of \eqref{eq26.6.1} which yields
\begin{align*}
\sum_{n=0}^{\infty} S(32n+11) q^n & \equiv 8q^2f_2f_6^3f_{24}^3 + 8f_2^4 - \frac{f_3^3f_4^5f_6^8}{f_1f_2^2f_8^2f_{12}^4} + 8q^2\frac{f_3f_4^3f_6^7}{f_1} \pmod{16}.
\end{align*}
We now use \eqref{eq3} and \eqref{eq5} to extract the odd parts on both sides of the last congruence to obtain
\begin{align*}
\sum_{n=0}^{\infty} S(64n+43) q^{2n+1} & \equiv -q\frac{f_4^4f_6^8}{f_2^2f_8^2f_{12}} + 8q^3\frac{f_4^3f_6^8f_8^2f_{48}}{f_2^2f_{16}f_{24}} \pmod{16},
\end{align*}
which after dividing by $q$ and replacing $q^2$ by $q$ yields
\begin{align}
\sum_{n=0}^{\infty} S(64n+43) q^{n} & \equiv -\frac{f_2^4f_3^8}{f_1^2f_4^2f_{6}} + 8q\frac{f_2^3f_3^8f_4^2f_{24}}{f_1^2f_{8}f_{12}} \notag\\
& \equiv -\frac{f_2^4f_3^8}{f_1^2f_4^2f_{6}} + 8qf_2^2f_6^6 \pmod{16}.\label{64n43mod16}
\end{align}
Using Lemma \ref{L4} we obtain
\begin{align*}
\sum_{n=0}^{\infty} S(128n+107) q^{2n+1} & \equiv 8q^3\frac{f_8^5f_{12}^8}{f_2f_4^2f_6f_{16}^2} - 2q\frac{f_{12}^{20}f_{16}^2}{f_2f_6^5f_{8}f_{24}^8} +8qf_2^2f_6^6 \pmod{16},
\end{align*}
from which it follows that
\begin{align*}
\sum_{n=0}^{\infty} S(128n+107) q^{n} & \equiv 8q\frac{f_4^5f_{6}^8}{f_1f_2^2f_3f_{8}^2} - 2\frac{f_3^3f_8^2}{f_1f_4} +8f_2f_6^3 \pmod{16}.
\end{align*}
Finally, we use \eqref{eq3} and \eqref{eq5} to obtain
\begin{align*}
\sum_{n=0}^{\infty} S(256n+235) q^{2n+1} & \equiv 8q\frac{f_4^4f_{6}^4f_{12}^5}{f_2^4f_{24}^2} - 2q\frac{f_8^2f_{12}^3}{f_4^2} \pmod{16},
\end{align*}
which, after division by $q$ and replacement of $q^2$ by $q$, yields
\begin{align*}
\sum_{n=0}^{\infty} S(256n+235) q^{n} & \equiv 8f_2^2f_6^3 - 2\frac{f_4^2f_{6}^3}{f_2^2} \pmod{16}.
\end{align*}
This proves \eqref{eq22.6.3} since the right-hand side of the last congruence is a function of $q^2$. 
\end{proof}

Next, we prove an internal congruence which will provide the machinery necessary to prove an infinite family of congruences for $S(n)$ modulo $16$.  
\begin{theorem}
  \label{internal_16}
For all $n\geq 0$,  
$$S\big(256n+171\big) = S\big(4(64n+43)-1\big) \equiv S\big(64n+43\big) \pmod{16}.$$
\end{theorem}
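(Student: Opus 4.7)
The plan is to extract successive $2$-dissections modulo $16$ from the congruence \eqref{64n43mod16} and compare the result to the same congruence. Since $256n+171 = 64(4n+2)+43$, the series $\sum_n S(256n+171)q^n$ is obtained from $\sum_k S(64k+43) q^k$ by first extracting even parts (yielding $\sum_m S(128m+43) q^m$), and then extracting odd parts of the resulting series.

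First, I will apply Lemma \ref{L4} to $2$-dissect $\frac{1}{f_1^2}$. For a mod-$16$ $2$-dissection of $f_3^8$, I substitute $q\mapsto q^3$ in the formula for $f_1^2$ in Lemma \ref{L4} to obtain $f_3^2 = A - 2q^3 B$ with $A=\frac{f_6 f_{24}^5}{f_{12}^2 f_{48}^2}$ and $B=\frac{f_6 f_{48}^2}{f_{24}}$, and raise to the fourth power. Tracking the binomial coefficients modulo $16$ (using $24\equiv 8$, $-32\equiv 0$, $16\equiv 0 \pmod{16}$) gives $f_3^8 \equiv A^4 - 8q^3 A^3 B + 8q^6 A^2 B^2 \pmod{16}$, which is a genuine $2$-dissection since $A$ and $B$ are power series in even powers of $q$. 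Combining this with the dissection of $\frac{1}{f_1^2}$, I extract the even part of $-\frac{f_2^4 f_3^8}{f_1^2 f_4^2 f_6}$; the $8qf_2^2f_6^6$ term of \eqref{64n43mod16} contributes nothing, as it has only odd exponents. Replacing $q^2$ by $q$ then produces a formula for $\sum_n S(128n+43)q^n$ modulo $16$.

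Next, I apply Lemma \ref{L2} (which dissects $f_3^3/f_1$) to the resulting formula and extract odd parts; after dividing by $q$ and replacing $q^2$ by $q$, this yields a congruence for $\sum_n S(256n+171)q^n$ modulo $16$. To simplify, I use the basic congruence $f_n^{16}\equiv f_{2n}^8 \pmod{16}$ (which follows by iterating $f_n^2\equiv f_{2n}\pmod 2$) to collapse a factor $f_3^{16}f_{12}^8$ to $f_6^{24}$ modulo $16$, rewriting the leading term in the shape $-\frac{f_2^4 f_3^8}{f_1^2 f_4^2 f_6}$. The remaining $8q$-term simplifies using $1/f_1^4 \equiv 1/f_2^2 \pmod{4}$ and $1/f_3^2 \equiv 1/f_6 \pmod{2}$. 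Subtracting from the right-hand side of \eqref{64n43mod16}, the difference $\sum_n [S(256n+171) - S(64n+43)]q^n$ reduces modulo $16$ to $-8qf_2^2f_6^6\bigl(\tfrac{f_2^4}{f_4^2}+1\bigr)$, and this vanishes modulo $16$ because $f_2^4 \equiv f_4^2 \pmod{2}$ (obtained by squaring $f_1^2\equiv f_2\pmod 2$ and substituting $q\mapsto q^2$), completing the proof.

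The hardest step is the $2$-dissection of $f_3^8$ modulo $16$ and its combination with the dissection of $\frac{1}{f_1^2}$: one must expand $(A-2q^3B)^4$ carefully, and when multiplying by $1/f_1^2 = C + 2qD$, products of odd parts with odd parts—both already carrying a factor of $2$—can still contribute modulo $16$ to the even part of the result, and these cross-contributions must be accounted for to avoid missing cancellations.
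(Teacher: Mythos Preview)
Your overall plan—start from \eqref{64n43mod16}, extract the even part to reach $\sum S(128n+43)q^n$, then the odd part to reach $\sum S(256n+171)q^n$, and compare back to \eqref{64n43mod16}—is the same scaffolding the paper uses. The genuine divergence is at the final verification. The paper, after reaching
\[
\sum_{n\ge 0} S(256n+171)q^n \equiv -\frac{f_2^{10} f_3^4 f_6^5}{f_1^6 f_4^4 f_{12}^2} - 4q\,\frac{f_2^4 f_3^6 f_{12}^2}{f_1^4 f_6} \pmod{16},
\]
does \emph{not} try to reduce the difference with $-\dfrac{f_2^4 f_3^8}{f_1^2 f_4^2 f_6}+8qf_2^2f_6^6$ by elementary congruences. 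Instead it packages the difference as $L(q)$ in \eqref{eq:mod-16-to-be-proved-new} and evaluates $L(q)\cdot f_4^2 f_6/f_3^2$ via the Alaca--Alaca--Williams parameterization (Lemma~\ref{le:AAW-para}), obtaining an \emph{exact} identity with an explicit factor of $16$. So the paper's proof hinges on Lemma~\ref{le:AAW-para}, which you avoid entirely.

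Your alternative—collapsing the leading term to $-\dfrac{f_2^4 f_3^8}{f_1^2 f_4^2 f_6}$ via $f_3^{16}f_{12}^8\equiv f_6^{24}\pmod{16}$ and then reducing the remainder to $-8qf_2^2f_6^6\bigl(\tfrac{f_2^4}{f_4^2}+1\bigr)$—is attractive if it works, since it would give a fully elementary proof. But this is exactly the step you have not carried out: the leading terms on the two sides differ by a factor $\dfrac{f_2^6 f_6^6}{f_1^4 f_3^4 f_4^2 f_{12}^2}$, which is only $\equiv 1\pmod{8}$ (not $\pmod{16}$), so the ``cancellation'' of leading terms necessarily leaves behind an extra $8$-multiple piece that must be tracked and combined with your $8q$-terms. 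Your proposal asserts the outcome of that bookkeeping without displaying it, and the paper's recourse to the AAW machinery suggests that this accounting is not as immediate as you indicate. Until the intermediate expressions are written out and the claimed reduction to $-8qf_2^2f_6^6\bigl(\tfrac{f_2^4}{f_4^2}+1\bigr)$ is checked term by term, the argument has a gap at precisely the point where the paper invokes its strongest tool.
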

\begin{proof}  In \eqref{64n43mod16}, we noted that 
$$
\sum_{n=0}^{\infty} S(64n+43) q^{n} 
\equiv -\frac{f_2^4f_3^8}{f_1^2f_4^2f_{6}} + 8qf_2^2f_6^6 \pmod{16}.
$$
We next need to determine a similar congruence modulo 16 for the generating function dissection for $S(256n+171)$.
Using \eqref{64n43mod16} and the same kinds of dissection tools that have already been demonstrated, we can easily show that 
\begin{align*}
\sum_{n=0}^{\infty} S(128n+43) q^{n} 
&\equiv -\frac{f_4^5f_6^{20}}{f_1f_2^2f_3^5f_8^2f_{12}^8} \\
&\equiv -\frac{f_4^5f_6^{20}}{f_2^2f_8^2f_{12}^8}\left( \frac{1}{f_1f_3}\right)\left( \frac{1}{f_3^4}\right)
\pmod{16}.
\end{align*}
Using Lemmas \ref{L4} and \ref{L3}, we can then show that 
$$
\sum_{n=0}^{\infty} S(256n+171) q^{n} 
\equiv -\frac{f_2^{10} f_3^4 f_6^5}{f_1^6 f_4^4 f_{12}^2} -4q \frac{ f_2^4 f_3^6 f_{12}^2}{f_1^4 f_6} \pmod{16}.
$$
Therefore, in order to prove this theorem, we want to prove 
	\begin{align*}
		-\frac{f_2^4 f_3^8}{f_1^2 f_4^2 f_6} + 8q f_4 f_6^6 \equiv -\frac{f_2^{10} f_3^4 f_6^5}{f_1^6 f_4^4 f_{12}^2} -4q \frac{ f_2^4 f_3^6 f_{12}^2}{f_1^4 f_6} \pmod{16}.
	\end{align*}
To do so, we begin by noting that
	\begin{align*}
		\frac{f_2 f_6^2}{f_1^2 f_{12}} \equiv 1 \pmod{2}.
	\end{align*}
	Thus,
	\begin{align*}
		8q f_4 f_6^6 \equiv 8q\frac{f_2 f_4 f_6^8}{f_1^2 f_{12}} \pmod{16}.
	\end{align*}
	Hence,  it is sufficient to prove that $L(q)\equiv 0 \pmod{16} $ where 
	\begin{align}\label{eq:mod-16-to-be-proved-new}
		L(q):= -\frac{f_2^4 f_3^8}{f_1^2 f_4^2 f_6} + 8q \frac{f_2 f_4 f_6^8}{f_1^2 f_{12}} + \frac{f_2^{10} f_3^4 f_6^5}{f_1^6 f_4^4 f_{12}^2} + 4q \frac{ f_2^4 f_3^6 f_{12}^2}{f_1^4 f_6}.
	\end{align}
	We then make use of the Alaca--Alaca--Williams parameterization. Substituting the relations in Lemma \ref{le:AAW-para} into $L(q)$, we have
	\begin{align*}
		L(q)\cdot \frac{f_4^2 f_6}{f_3^2} = 16 q s^4 t^2 (1 + q t)^3 (1 + 2 q t) (1 + 4 q t).
	\end{align*}
	Since the series $f_4^2 f_6/f_3^2$ is invertible in the ring $\mathbb{Z}/16\mathbb{Z}[[q]]$, while the right-hand side of the above vanishes in the same ring, we conclude that
	\begin{align*}
		L(q) \equiv 0 \pmod{16},
	\end{align*}
	as required.
\end{proof}

Theorem \ref{internal_16} now allows us to efficiently prove the following infinite family of congruences modulo $16$:  

\begin{theorem}
\label{congs_mod16}
For all $\alpha\geq 0$ and $n\geq 0$,
$$
S\left(2^{5+2\alpha}n+\left(2^{5+2\alpha}-\frac{2^{2+2\alpha}-1}{3}\right)\right)\equiv 0 \pmod{16}.
$$
\end{theorem}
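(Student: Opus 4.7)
The plan is to proceed by induction on $\alpha$, using Theorem~\ref{c1} to supply the base cases and the internal congruence of Theorem~\ref{internal_16} as the inductive engine. Writing $b_\alpha := 2^{5+2\alpha} - \tfrac{2^{2+2\alpha}-1}{3}$, a direct calculation gives the recursion $b_{\alpha+1} = 4 b_\alpha - 1$. Since $b_2 = 491 = 7\cdot 64 + 43 \equiv 43 \pmod{64}$, iterating $43 \mapsto 4\cdot 43 - 1 = 171 \equiv 43 \pmod{64}$ shows $b_\alpha \equiv 43 \pmod{64}$ for every $\alpha \geq 2$. Combined with the divisibility $64 \mid 2^{5+2\alpha}$ when $\alpha \geq 2$, this ensures that for any such $\alpha$ and any $n \geq 0$ the quantity
$$
m := \frac{2^{5+2\alpha} n + b_\alpha - 43}{64}
$$
is a nonnegative integer, and $2^{5+2\alpha} n + b_\alpha = 64 m + 43$.

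The base cases $\alpha \in \{0,1,2\}$ are exactly \eqref{eq22.6.1}, \eqref{eq22.6.2}, and \eqref{eq22.6.3}. For the inductive step, assuming the desired congruence at level $\alpha \geq 2$, Theorem~\ref{internal_16} applied at the $m$ above yields
$$
S(256 m + 171) \equiv S(64 m + 43) = S\!\left(2^{5+2\alpha} n + b_\alpha\right) \equiv 0 \pmod{16}.
$$
But $256 m + 171 = 4(64 m + 43) - 1 = 4\!\left(2^{5+2\alpha} n + b_\alpha\right) - 1 = 2^{5+2(\alpha+1)} n + (4 b_\alpha - 1) = 2^{5+2(\alpha+1)} n + b_{\alpha+1}$, so the congruence passes from level $\alpha$ to level $\alpha+1$, completing the induction.

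I do not anticipate any substantive obstacle: the analytic content has already been packaged into Theorems~\ref{c1} and \ref{internal_16}, and what remains is just arithmetic bookkeeping on the offsets $b_\alpha$. The one subtlety worth flagging is that the residue condition $b_\alpha \equiv 43 \pmod{64}$ required to invoke Theorem~\ref{internal_16} fails at $\alpha=0$ (where $b_0=31$) and at $\alpha=1$ (where $b_1=123 \equiv 59 \pmod{64}$), and first becomes true at $\alpha = 2$. This explains why Theorem~\ref{c1} must deliver three starting cases rather than a single one: the first two sit outside the reach of the internal congruence and have to be established by direct dissection before the induction can take over.
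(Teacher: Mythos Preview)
Your proof is correct and follows essentially the same approach as the paper: induction on $\alpha$ with base cases $\alpha=0,1,2$ supplied by Theorem~\ref{c1} and the inductive step driven by Theorem~\ref{internal_16}, together with the observation that the argument lies in the residue class $43\pmod{64}$ for all $\alpha\ge 2$. Your presentation is in fact slightly more explicit than the paper's, in that you introduce the notation $b_\alpha$, verify the recursion $b_{\alpha+1}=4b_\alpha-1$ directly, and point out why the induction cannot start before $\alpha=2$.
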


\begin{proof}
We prove this theorem by induction on $\alpha$.  Note that the cases $\alpha=0, 1, 2$ have already been proved in Theorem \ref{c1} above.  We now use the case $\alpha=2$ as our base case in a proof by induction.   
Thus, we assume that, for some $\alpha\geq 2$ and all $n\geq 0$, 
$$
S\left(2^{5+2\alpha}n+\left(2^{5+2\alpha}-\frac{2^{2+2\alpha}-1}{3}\right)\right)\equiv 0 \pmod{16}.
$$
We then want to prove that 
$$
S\left(2^{5+2(\alpha+1)}n+\left(2^{5+2(\alpha+1)}-\frac{2^{2+2(\alpha+1)}-1}{3}\right)\right)\equiv 0 \pmod{16}.
$$
It is important to note that, for all $\alpha\geq 2$, 
$$
2^{5+2\alpha}n+\left(2^{5+2\alpha}-\frac{2^{2+2\alpha}-1}{3}\right) \equiv 43 \pmod{64}.
$$
This will allow the induction step below to go through via Theorem \ref{internal_16}.
Indeed, 
\begin{align*}
&\!\!\!\!\!\!\!\!\!\!\!\! S\left(2^{5+2(\alpha+1)}n+\left(2^{5+2(\alpha+1)}-\frac{2^{2+2(\alpha+1)}-1}{3}\right)\right) \\
&= S\left(2^{5+2(\alpha+1)}n+\left(2^{5+2(\alpha+1)}-\frac{2^{2+2(\alpha+1)}-4}{3}\right)-1\right) \\
&= 
S\left(4\left(2^{5+2\alpha}n+\left(2^{5+2\alpha}-\frac{2^{2+2\alpha}-1}{3}\right)\right) - 1\right) \\
&\equiv 
S\left(2^{5+2\alpha}n+\left(2^{5+2\alpha}-\frac{2^{2+2\alpha}-1}{3}\right)\right) \\
&\equiv 
0 \pmod{16},  
\end{align*}
where Theorem \ref{internal_16} is utilized in the penultimate congruence above while the induction hypothesis is used in the last. The result follows.
\end{proof}


\section{Internal Congruences Modulo 8}

While computationally searching for the result in Theorem \ref{internal_16}, we discovered other internal congruences satisfied by $S(n)$.  In this section, we focus our attention on internal congruences modulo $8$.  
\begin{theorem}
  \label{internal_8}
For all $n\geq 0$,  
\begin{align}
    S\big(64n+43\big) &= S\big(4(16n+11)-1\big) \equiv S\big(16n+11\big) \pmod{8}, \label{internal_8.1} \\
    S\big(64n+59\big) &= S\big(4(16n+15)-1\big) \equiv S\big(16n+15\big) \pmod{8}. \label{internal_8.2}
\end{align}
\end{theorem}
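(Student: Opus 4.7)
The plan is to mirror exactly the strategy used for Theorem \ref{internal_16}. For each of the two internal congruences, I start from the mod-16 generating function expressions for the relevant $\sum_{n} S(An+B) q^n$ already established in the course of proving Theorem \ref{c1}, reduce them modulo 8, form the difference of the two sides of the claimed congruence, and show this difference vanishes modulo 8 via the Alaca--Alaca--Williams parameterization of Lemma \ref{le:AAW-para}.

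For \eqref{internal_8.1}, reducing the known mod-16 expressions modulo 8 kills the terms carrying coefficient 8 and leaves
$$\sum_{n\geq 0} S(16n+11) q^n \equiv 4q\, f_2^2 f_3^2 f_6^5 - \frac{f_1^2 f_6^{17}}{f_2^2 f_3^4 f_{12}^6} \pmod{8}$$
and
$$\sum_{n\geq 0} S(64n+43) q^n \equiv -\frac{f_2^4 f_3^8}{f_1^2 f_4^2 f_6} \pmod{8}.$$
Setting
$$M_1(q) := \frac{f_2^4 f_3^8}{f_1^2 f_4^2 f_6} + 4q\, f_2^2 f_3^2 f_6^5 - \frac{f_1^2 f_6^{17}}{f_2^2 f_3^4 f_{12}^6},$$
the task reduces to proving $M_1(q) \equiv 0 \pmod{8}$. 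I will multiply $M_1(q)$ by a convenient common-denominator eta-quotient invertible in $\mathbb{Z}/8\mathbb{Z}[[q]]$ and then substitute the parameterization from Lemma \ref{le:AAW-para}. Just as in the proof of Theorem \ref{internal_16}, I expect the resulting polynomial in $s$, $t$, and $qt$ to carry an overall factor of 8, yielding $M_1(q) \equiv 0 \pmod{8}$ and hence \eqref{internal_8.1}.

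The same recipe handles \eqref{internal_8.2}: reduce $\sum S(16n+15) q^n$ and $\sum S(64n+59) q^n$ modulo 8, form the analogous difference $M_2(q)$, and apply Lemma \ref{le:AAW-para} to confirm $M_2(q) \equiv 0 \pmod{8}$. Once the two sides are written in comparable form modulo 8, both expressions share the term $2 f_4^2 f_6^3 / f_2^2$ and differ only in terms of weight 4, so this verification should be noticeably cleaner than the first and may even collapse by direct inspection after one more round of the elementary simplifications $f_m^{2} \equiv f_{2m} \pmod{2}$ and $f_m^{4} \equiv f_{2m}^{2} \pmod{4}$.

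The principal obstacle in both cases is the parameterization verification itself. Substituting the Lemma \ref{le:AAW-para} expressions for $f_1, f_2, f_3, f_4, f_6, f_{12}$ into a difference of several distinct eta-quotients produces a substantial rational expression in $s$, $t$, and $qt$ whose overall divisibility by 8 must be confirmed. The computation is elementary in spirit but tedious, and is best carried out with a computer algebra system, exactly as at the analogous point of the proof of Theorem \ref{internal_16}.
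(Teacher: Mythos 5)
Your overall strategy is viable but genuinely different from what the paper does, and it is also harder than necessary. The paper never invokes the Alaca--Alaca--Williams parameterization for this theorem: it simply pushes the $2$-dissection one step further and observes that each pair of generating functions lands on the \emph{same} eta-quotient modulo $8$ --- namely $\sum_n S(16n+11)q^n \equiv \sum_n S(64n+43)q^n \equiv -f_2^4 f_6^3/(f_1^2 f_4^2)$ and $\sum_n S(16n+15)q^n \equiv \sum_n S(64n+59)q^n \equiv 6 f_2^2 f_6^3 \pmod{8}$ --- so both congruences are immediate. Your route defers the entire burden to an AAW verification that you have not carried out, and there is a concrete obstruction to executing it as written: the three terms of your $M_1(q)$ have unequal weights (under Lemma \ref{le:AAW-para} the quotients $f_2^4f_3^8/(f_1^2f_4^2f_6)$ and $f_1^2f_6^{17}/(f_2^2f_3^4f_{12}^6)$ each contribute $s^{7/2}$, while $4q\,f_2^2f_3^2f_6^5$ contributes $s^{9/2}$), so no single eta-quotient multiplier turns $M_1$ into one monomial in $s$, $t$, $qt$ carrying a visible factor of $8$. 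You would first need to equalize the weights by inserting a quotient congruent to $1$ modulo $2$, exactly as the paper does with $f_2f_6^2/(f_1^2f_{12})\equiv 1\pmod{2}$ in the proof of Theorem \ref{internal_16}; without that step the substitution does not terminate in a checkable polynomial identity.

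A second caution concerns \eqref{internal_8.2}. The intermediate expression for $\sum_n S(16n+15)q^n$ printed in the proof of Theorem \ref{c1} appears to carry a spurious factor: extracting the odd part of $4f_1f_3\cdot f_4^4f_{12}^5/(f_2^3f_6f_{24}^2)$ via Lemma \ref{L3} gives $-4q\,f_4^8f_{12}^3/(f_2^4f_8^2)$, with no $f_{24}$, so the final term there should read $-4f_2^2f_6^3$ rather than $-4f_2^2f_6^3f_{12}$. This is harmless for \eqref{eq22.6.1}, since both versions are series in $q^2$, but it matters for you: quoting the printed expression, your difference $M_2$ reduces to $4f_2^2f_6^3(f_{12}-1)$, which is \emph{not} $\equiv 0\pmod{8}$, so the ``collapse by direct inspection'' you anticipate would fail. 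You must rederive that dissection rather than cite it; with the corrected term both sides reduce to $6f_2^2f_6^3\pmod{8}$ and the second congruence follows at once.
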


\begin{proof}
It follows from \eqref{8diss_3} that
\begin{align*}
\sum_{n=0}^{\infty} S(8n+3)q^n & \equiv \frac{f_2^{11}f_6^3f_{12}^5}{f_1^7f_3^7f_4^4f_{24}^2} \\
& \equiv f_1f_3 \frac{f_{12}^5}{f_2f_6f_{24}^2} \pmod{8}.
\end{align*}
We can use Lemma \ref{L3} to extract the odd parts on both sides of the last expression, which yields
\begin{align*}
\sum_{n=0}^{\infty} S(16n+11)q^{2n+1} & \equiv -q \frac{f_4^4f_{12}^3}{f_2^2f_8^2} \pmod{8}. 
\end{align*}
Dividing by $q$ and replacing $q^2$ by $q$, we obtain
\begin{align}
\sum_{n=0}^{\infty} S(16n+11)q^{n} & \equiv -\frac{f_2^4f_{6}^3}{f_1^2f_4^2} \pmod{8}. 
\label{eq1.6.1}
\end{align}
Using Lemma \ref{L4}, we can 2-dissect \eqref{eq1.6.1} to obtain
\begin{align*}
\sum_{n=0}^{\infty} S(32n+27)q^{2n+1} & \equiv -2q\frac{f_6^3f_{16}^2}{f_2f_8} \pmod{8},\\
\sum_{n=0}^{\infty} S(32n+11)q^{2n} & \equiv -\frac{f_6^3f_{8}^5}{f_2f_4^2f_{16}^2} \pmod{8},
\end{align*}
which, after simplifications, give us
\begin{align*}
\sum_{n=0}^{\infty} S(32n+27)q^{n} & \equiv -2\frac{f_3^3f_{8}^2}{f_1f_4} \pmod{8},\\
\sum_{n=0}^{\infty} S(32n+11)q^{n} & \equiv -\frac{f_3^3f_{4}^5}{f_1f_2^2f_{8}^2} \pmod{8}. 
\end{align*}
We need the odd parts of both identities above, which are obtained by employing Lemma \ref{L2}:
\begin{align}
\sum_{n=0}^{\infty} S(64n+59)q^{n} & \equiv 6 f_2^2f_6^3 \pmod{8},\label{eq1.6.2} \\
\sum_{n=0}^{\infty} S(64n+43)q^{n} & \equiv -\frac{f_2^4f_{6}^3}{f_1^2f_{4}^2} \pmod{8}. \label{eq1.6.3}
\end{align}
We see that \eqref{internal_8.1} follows from \eqref{eq1.6.1} and \eqref{eq1.6.3}.

In order to prove \eqref{internal_8.2} we use the elementary facts $2f_{m}^{4k} \equiv 2f_{2m}^{2k} \pmod{8}$ and $4f_{m}^{2k} \equiv 4f_{2m}^{k} \pmod{8}$ to rewrite  \eqref{8diss_7} as
\begin{equation*}
\sum_{n=0}^{\infty} S(8n+7)q^{n} \equiv 4\frac{1}{f_1f_3} f_2^6f_{12} +2qf_1f_3\frac{f_6f_{12}^3}{f_2} \pmod{8}.
\end{equation*}
Using Lemma \ref{L3}, we extract the odd parts on both sides of the congruence above to obtain
\begin{align}
\sum_{n=0}^{\infty} S(16n+15)q^{n} & \equiv 4\frac{f_1^2f_2^5f_{12}^2}{f_3^2f_4^2} + 2\frac{f_4^2f_{6}^7}{f_2^2f_{12}^2} \nonumber \\
& \equiv 6f_2^2f_6^3 \pmod{8}. \label{eq1.6.4}
\end{align}
Note that \eqref{internal_8.2} follows by comparing \eqref{eq1.6.2} and \eqref{eq1.6.4}.
\end{proof}


\section{Closing Thoughts}  

In the work above, we have accomplished our primary goal, which was to prove an infinite family of Ramanujan-like congruences satisfied by $S(n)$ modulo $16$.  With that said, we readily admit that our results above are not exhaustive; computational evidence indicates, for example, that there are Ramanujan-like congruences modulo $8$ which are satisfied by $S(n)$ which are not covered by our results above.  Moreover, our data support the following congruences modulo $32$:

\begin{conjecture}
  \label{internal_32}
For all $n\geq 0$,  
\begin{align*}
    S\big(256n+123\big) &= S\big(4(64n+31)-1\big) \equiv S\big(64n+31\big) \pmod{32},  \\
    S\big(256n+171\big) &= S\big(4(64n+43)-1\big) \equiv S\big(64n+43\big) \pmod{32},  \\
    S\big(256n+235\big) &= S\big(4(64n+59)-1\big) \equiv S\big(64n+59\big) \pmod{32},  \\
    S\big(256n+251\big) &= S\big(4(64n+63)-1\big) \equiv S\big(64n+63\big) \pmod{32}.  
\end{align*}
\end{conjecture}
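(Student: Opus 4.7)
The plan is to extend the mod 16 argument used for Theorem \ref{internal_16} to mod 32, carrying out the analogous dissection-and-parameterization strategy for all four residue classes simultaneously. Note that the four target congruences all have the shape $S(256n+r')\equiv S(64n+r)\pmod{32}$ with $r'=4r-1$ and $r\in\{31,43,59,63\}$; the LHS progressions are all $\equiv 3\pmod 8$, while the RHS splits into two cases from $S(8n+3)$ (for $r=43,59$) and two from $S(8n+7)$ (for $r=31,63$). So the starting points are the dissections \eqref{8diss_3} and \eqref{8diss_7}.

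First, I would produce explicit eta-quotient representations, modulo $32$, for each of $\sum_{n\geq 0}S(64n+r)q^n$ with $r\in\{31,43,59,63\}$. The recipe is to iterate the $2$-dissection lemmas (\ref{L4}, \ref{L2}, \ref{L3}) starting from \eqref{8diss_3} and \eqref{8diss_7}, extracting even or odd parts at each stage to pass from residues modulo $8$ to $16$ to $32$ to $64$. At each step one reduces using the elementary congruences $2f_m^{16k}\equiv 2f_{2m}^{8k}$, $4f_m^{8k}\equiv 4f_{2m}^{4k}$, $8f_m^{4k}\equiv 8f_{2m}^{2k}$, $16f_m^{2k}\equiv 16f_{2m}^{k}$ modulo $32$, which keeps the expressions manageable. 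The same procedure, continued two further dissection stages, produces eta-quotient representations modulo $32$ for $\sum_{n\geq 0}S(256n+r')q^n$ with $r'\in\{123,171,235,251\}$; here the congruence \eqref{64n43mod16} and the mod $16$ formulas for $\sum S(128n+43)q^n$ and $\sum S(256n+171)q^n$ computed in the proof of Theorem \ref{internal_16} may be reused after lifting them to mod $32$ by tracking the coefficients that were previously dropped.

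With the two sides in hand, for each of the four residues I would form the difference
\begin{align*}
L_r(q):=\sum_{n\geq 0}S(256n+4r-1)q^n-\sum_{n\geq 0}S(64n+r)q^n\pmod{32},
\end{align*}
clear a common denominator so that $L_r(q)$ times a fixed eta-quotient becomes a polynomial in $f_1,f_2,f_3,f_4,f_6,f_{12}$, and substitute the Alaca--Alaca--Williams parameterization from Lemma \ref{le:AAW-para}. Exactly as in the proof of Theorem \ref{internal_16}, one expects each $L_r(q)$, after multiplication by an invertible series in $\mathbb{Z}/32\mathbb{Z}[[q]]$, to reduce to a polynomial in $s$, $t$, $q$ whose coefficients are all divisible by $32$, at which point the desired internal congruence follows immediately.

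The main obstacle will be the size and bookkeeping of the intermediate expressions: working modulo $32$ requires tracking every term with coefficient $\leq 16$ through four or five successive 2-dissections of each of the two starting series $S(8n+3)$ and $S(8n+7)$, whereas the mod $16$ calculation in Theorem \ref{internal_16} was already lengthy. A second subtlety is that the AAW parameterization only directly handles $f_1,f_2,f_3,f_4,f_6,f_{12}$, so any appearance of $f_8$, $f_{16}$, $f_{24}$, or $f_{48}$ in $L_r(q)$ must be cleared by one additional dissection, possibly doubling the term count. I expect these difficulties to be merely computational rather than conceptual; verifying the four identities $L_r(q)\equiv 0\pmod{32}$ should be straightforward (if tedious) with a symbolic algebra system, paralleling exactly the final step of Theorem \ref{internal_16}. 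Once Conjecture \ref{internal_32} is proven in this way, the induction template of Theorem \ref{congs_mod16} would immediately convert each of the four internal congruences into an infinite family of Ramanujan-like congruences modulo $32$.
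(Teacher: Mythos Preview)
The paper does not prove this statement: it is explicitly labeled a \emph{Conjecture} in Section~6 (``Closing Thoughts''), supported only by numerical data, and the authors write ``We leave it to the motivated reader to explain these congruences modulo powers of $2$.'' There is therefore no proof in the paper to compare your proposal against.

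Your plan is the natural extension of the paper's own method for Theorem~\ref{internal_16}: iterate the $2$-dissection lemmas to obtain eta-quotient representations of both sides modulo $32$, form the difference $L_r(q)$, and then apply the Alaca--Alaca--Williams parameterization (Lemma~\ref{le:AAW-para}) to show $L_r(q)\equiv 0\pmod{32}$. This is exactly the strategy the authors would presumably try, and your remarks about the bookkeeping blowup and about needing to eliminate $f_8,f_{16},f_{24},f_{48}$ before the AAW substitution are accurate and relevant.

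That said, what you have written is a \emph{plan}, not a proof. The decisive step---that after the AAW substitution each $L_r(q)$ (times an invertible series) becomes $32$ times a polynomial in $s,t,q$---is asserted on expectation (``one expects\dots'') but never carried out. In the mod $16$ case this step worked because the paper actually computed the substitution and exhibited the factor of $16$; here no such computation is shown, and the conjecture remains a conjecture until it is. If you intend this as a proof, you must perform (or report the output of) the four symbolic computations. Your final remark about obtaining infinite families modulo $32$ via the induction template of Theorem~\ref{congs_mod16} is also incomplete: that induction required both an internal congruence \emph{and} a base case (Theorem~\ref{c1}), and no base-case congruences modulo $32$ have been established.
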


We leave it to the interested reader to consider proving additional Ramanujan-like congruences which are satisfied by $S(n)$ given the existence of the results mentioned above.

\end{document}